\long\def\symbolfootnote[#1]#2{\begingroup%
\def\thefootnote{\fnsymbol{footnote}}\footnote[#1]{#2}\endgroup}
\newcommand{\Z}{\mathbb{Z}}
\newcommand{\C}{\mathscr{Z}}
\newcommand{\diag}{\textup{diag}}
\newcommand{\GL}{\mathrm{GL}}
\newcommand{\Sp}{\mathrm{Sp}}
\newcommand{\fqn}[1]{\mathbb{F}_{q^{#1}}}
\newcommand{\fq}{\ensuremath{\mathbb{F}_q}}
\newcommand{\On}[1]{\mathrm{O}^{#1}}
\newcommand{\U}{\mathrm{U}}
\newcommand{\R}{\mathrm{R}}
\def\imod#1{\allowbreak\mkern10mu({\operator@font mod}\,\,#1)}
\newtheorem{theorem}{Theorem}[section]
\newtheorem{lemma}[theorem]{Lemma}
\newtheorem{corollary}[theorem]{Corollary}
\newtheorem{proposition}[theorem]{Proposition}
\newtheorem*{theorem*}{Theorem}
\theoremstyle{definition}
\numberwithin{equation}{section}
\newcommand{\ignore}[1]{}
\newcommand{\mynote}[1]{}
\newcommand{\G}{\mathrm G}
\begin{document}
\setcounter{section}{0}
\title[Fibers of the square map]{Fibers of the square map in some finite groups \\of Lie type and an application}
\author{Saikat Panja}
\email{panjasaikat300@gmail.com}
\address{Harish-Chandra Research Institute- Main Building, Chhatnag Rd, Jhusi, Uttar Pradesh 211019, India}
\thanks{The author is supported by a PDF-Math fellowship from the Harish-Chandra Research Institute.}
\date{\today}
\subjclass[2020]{20G40, 20D06}
\keywords{word maps, finite groups of Lie type, square root, real conjugacy classes}
\begin{abstract}
    Let $G$ be one of the finite general linear, unitary, symplectic or orthogonal groups over finite fields of odd order. We find the cardinality of the fibers of the square map at a given arbitrary element of $G$. Using this we find the number of real conjugacy classes of $G$. This is primarily achieved by leveraging a recent solution to Brauer's problem 14.
\end{abstract}
\maketitle
\section{Introduction}
In recent decades, there has been increasing interest and progress in the theory of word maps in groups, driven by various motivations and applications.
By a \emph{word}, we refer to an element $\omega$ in the free group on $t$ generators, denoted $\mathscr{F}_t$.
For any group $G$ and a word $\omega$, there exists a map $\widetilde{\omega}: G^t \longrightarrow G$ through evaluation, which is known as \emph{word map}.
Some of the important questions studied extensively are: 
(a) analyzing the image of the word map,
(b) determining the width of the word, defined as the smallest $\ell > 0$ such that $\omega(G)^\ell = \langle \omega(G) \rangle$,
(c) investigating the fibers of the map,
(d) identifying the kernel of the map, i.e., the inverse image of the identity under the word map. 
A particularly intriguing result emerges for the second question when considering finite nonabelian simple groups. Refer to \cite{Shalev09WordMaps} and \cite{LarsenShalevTiep2011WaringSimple} for further details, where the width of non-trivial words in nonabelian finite simple groups of large orders was ultimately demonstrated to be two by Larsen, Shalev, and Tiep.
Refer to the following articles, namely \cite{Segal09VerbalWidth}, \cite{LarsenShalev12Application}, \cite{Guralnick18Surjective}, and \cite{Shalev13Survey}, to explore the nature of results and the state of available methodologies in the area of word maps.
Even more captivating results exist for certain specific words. 
For instance, if $\omega$ represents the commutator word $[x_1,x_2]$, it turns out that $\omega(G)=G$ for a nonabelian finite simple group $G$, meaning that every element of a nonabelian finite simple group is a commutator. 
This significant achievement was established by Liebeck, O’Brien, Shalev, and Tiep \cite{LiebeckObrienShalevTiep10Ore}, thereby resolving a long-standing conjecture posed by Ore in 1951 \cite{Ore51commutator}.
It is worth noting that, generally, the width of a word cannot be $1$. This observation comes from examining power maps, which take the form $x \mapsto x^M$ for some $M \geq 2$.
The focus of our investigation in this article is the square map associated with $M=2$, applied to certain finite groups of Lie type, namely the general linear group, unitary group, symplectic group, and orthogonal group over finite fields of odd order. 
The analysis of the image of the power maps for these groups has been explored in the following articles: \cite{KunduSingh22} (for the general linear group), \cite{PanjaSingh22symporth} (for the symplectic and orthogonal groups), and \cite{PanjaSingh2023unitary} (for the unitary groups). The findings in these works are articulated in terms of generating functions and categorize the conjugacy classes (and elements) of distinct types, namely separable, cyclic, regular, and semisimple.
In a recent study, we determined the kernel of the general power maps within these groups, see \cite{Panja2023roots}.
The main goal of this article is to determine the fiber of the square map at any element of the given group.
We revisit the groups introduced in \cref{sec:preliminaries}. In \cref{sec:Lie-type}, we identify the elements possessing a square root. In \cref{pro:number-of-square-root-arbitrary-GL}, we present the result for the general linear group, specifying the count of square roots for an arbitrary element. This result can be readily extended to the other groups discussed in the article. In \cref{sec:app}, we employ \cref{pro:number-of-square-root-arbitrary-GL} to determine the number of real conjugacy classes of the finite general linear group.
\section{Preliminaries}\label{sec:preliminaries}
While the groups discussed in this article are widely recognized, we briefly introduce them to establish our notation.
We work with a finite field of odd order $q$, where $q$ is a power of a prime. 
By $\GL_n(q)$ we denote the group of all invertible matrices with entries in the finite field $\fq$.
The subgroup of $\GL_n(q^2)$, fixing a non-degenerate Hermitian form is called a unitary group and will be denoted by $\U_n(q^2)$. 
Since all non-degenerate Hermitian forms on a $\fqn{2} ^n$ are unitarily congruent to the standard form, one concludes that all the unitary groups arising are conjugate with each other inside $\GL_n(q^2 )$.
The symplectic group is the subgroup of $\GL_{2n}(q )$ consisting of those elements which preserve a non-degenerate alternating bilinear form on $\fq ^{2n}$. 
The symplectic group will be denoted by $\Sp_{2n}(q)$. 
Lastly, the orthogonal group consists of elements of $\GL_n(q )$ which preserve a non-degenerate quadratic form $Q$ on $\fq ^n$. 
There are two non-isomorphic orthogonal groups when $n$ is even and one orthogonal group up to isomorphism when $n$ is odd. 
As is customary, we denote all them by $\On{\epsilon} _n(q)$ for $\epsilon\in\{\pm,0\}$.
We will require the parametrization of conjugacy classes and the determination of centralizer sizes for arbitrary elements in all the groups mentioned above.
These are widely known results, and we will refrain from restating them here. 
Nonetheless, we provide references for interested readers.
The conjugacy classes of ${\GL_n}(q)$ are established in \cite{Macdonald81conjugacy} by Macdonald (also refer to \cite{Green55Characters} by Green).
For ${\U_n}(q^2)$, the conjugacy classes are detailed in \cite{Ennola62} by Ennola and \cite{Wall63} by Wall.
The conjugacy classes for ${\Sp}_{2n}(q)$ and $\On{\epsilon} _n(q)$ are determined in \cite{Wall63} by Wall. 
They also mention the centralizer sizes in the respective groups.
To determine the classes which have a square root we need a result of Butler.
The result for $\GL_n(q)$ was determined in \cite{KunduSingh22} in a more general context, although we won't need them. 
We will reference the result in \cref{sec:Lie-type} for convenient access.
For an integer $s$ coprime to $q$, let $M(s;q)$ denote the multiplicative order of $q$ in $\Z/s\Z^\times$. Then we have the following result;
\begin{lemma}\cite[Theorem]{Butler55}\label{lem:factorization-of-power}
    Suppose that the polynomial $f(x)$ of degree $d$ is irreducible over $\fq$ and $t$ is the order of its roots. If $m$ is a natural number such that
    $(m,q)=1$, let $m=m_1m_2$ satisfying $(m_1,t)=1$, and each prime factor of
    $m_2$ is a divisor of $t$. Then
    \begin{enumerate}
        \item each root of $f(x^m)$ has order of the form $rm_2t$ where $r$ is a divisor of $m_1$; and
        \item if $e|m_1$, then $f(x^m)$ has exactly $dm_2\phi(e)/M(em_2t;q)$
        irreducible factors of degree $M(em_2t;q)$ over $\fq$, with roots of order $em_2t$.
    \end{enumerate}
\end{lemma}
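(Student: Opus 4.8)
The plan is to pass from the polynomial to its roots in $\overline{\fq}^\times$ and analyze the $m$-th power map there. Fix a root $\alpha$ of $f$, so $\alpha$ has order $t$ and its Frobenius conjugates $\alpha,\alpha^q,\dots,\alpha^{q^{d-1}}$ are precisely the $d$ roots of $f$ (here $d=M(t;q)$, since $f$ is the minimal polynomial of $\alpha$). As $(m,q)=1$, the substitution makes $f(x^m)$ separable of degree $dm$, and its roots are exactly the $m$-th roots of the roots of $f$; because the $m$-th roots of distinct elements are disjoint, it suffices to understand the $m$-th roots of the single element $\alpha$ and then track the Frobenius action. Since $\beta^m=\alpha$ forces $\beta^{mt}=1$, every such $\beta$ lies in $\mu_{mt}$, the group of $mt$-th roots of unity (note $(mt,q)=1$).

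First I would exploit the hypotheses to split the problem by the Chinese Remainder Theorem. The conditions $(m_1,t)=1$ and ``every prime factor of $m_2$ divides $t$'' give $(m_1,m_2)=1$ and hence $(m_1,m_2t)=1$, so $\mu_{mt}=\mu_{m_1}\times\mu_{m_2t}$, and I write $\beta=\beta_1\beta_2$ accordingly. Since $\ord(\alpha)=t$ divides $m_2t$ and is coprime to $m_1$, the element $\alpha$ lies entirely in $\mu_{m_2t}$. On $\mu_{m_1}$ the $m$-th power map is identically trivial (as $m_1\mid m$), so $\beta_1$ may be an arbitrary element of $\mu_{m_1}$ and contributes a factor $r:=\ord(\beta_1)$ dividing $m_1$. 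On $\mu_{m_2t}$ a short computation gives $\gcd(m,m_2t)=m_2$, so the $m$-th power map there has kernel $\mu_{m_2}$ and image $\mu_t\ni\alpha$; hence $\beta_2^m=\alpha$ has exactly $m_2$ solutions, forming a single coset of $\mu_{m_2}$.

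The key technical point, and the step I expect to be the main obstacle, is to show that \emph{every} solution $\beta_2$ has order exactly $m_2t$. I would verify this prime by prime: from $\ord(\beta_2^m)=t$ one gets $\ord(\beta_2)=t\,\gcd(\ord(\beta_2),m)$, and comparing $p$-adic valuations for each prime $p\mid m_2$ (for which $p\mid t$ as well) forces $v_p(\ord(\beta_2))=v_p(m_2)+v_p(t)=v_p(m_2t)$, while for the remaining primes the valuations already agree. Granting this, $\ord(\beta)=\lcm(r,m_2t)=r\,m_2t$ since $(r,m_2t)=1$, which is exactly part (1).

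For part (2), I would use that a root $\beta$ of order $n$ generates $\fq(\beta)=\mathbb{F}_{q^s}$ with $s=M(n;q)$, so every root of order $n=em_2t$ has degree $M(em_2t;q)$ over $\fq$, and since Frobenius preserves order, each irreducible factor whose roots have this order has degree exactly $M(em_2t;q)$. It then remains to count the roots of $f(x^m)$ of order $em_2t$. Among the $m$-th roots of the fixed $\alpha$, these are the $\beta=\beta_1\beta_2$ with $\ord(\beta_1)=e$: there are $\phi(e)$ choices for $\beta_1$ and $m_2$ choices for $\beta_2$, giving $m_2\phi(e)$, and multiplying by the $d$ roots of $f$ yields $d\,m_2\phi(e)$ such roots in total. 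Dividing by the common degree $M(em_2t;q)$ gives the stated count $d\,m_2\phi(e)/M(em_2t;q)$ of irreducible factors.
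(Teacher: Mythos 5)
The paper does not prove this statement at all: it is quoted verbatim as an external result, cited to Butler's 1955 theorem, so there is no internal proof to compare against. Your argument is nonetheless a correct, self-contained derivation. The reduction via $\mu_{mt}\cong\mu_{m_1}\times\mu_{m_2t}$ (valid since $(m_1,t)=1$ together with the condition on the prime factors of $m_2$ gives $(m_1,m_2t)=1$), the computation $\gcd(m,m_2t)=m_2$, and the valuation argument showing every solution of $\beta_2^m=\alpha$ in $\mu_{m_2t}$ has order exactly $m_2t$ (using $v_p(t)\geq 1$ for $p\mid m_2$ to force $\min(v_p(n),v_p(m_2))=v_p(m_2)$) all check out, and the count in part (2) follows from separability of $f(x^m)$ together with the fact that a root of order $n$ has degree $M(n;q)$ over $\fq$ and that Frobenius preserves order. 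The only point worth making explicit is that the $m$-th root sets of the $d$ distinct roots of $f$ are pairwise disjoint, which you do note; with that, the total $dm_2\phi(e)$ divided by the common factor degree gives the stated number of factors.
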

We also require the findings detailed in \cite{Panja2023roots}, which determined the count of roots of unity of a specific order through the utilization of generating functions. 
This information is essential for establishing the conclusive outcome outlined in the article. 
Please refer to \cref{thm:number-of-real-irreducible-characters} for the final result. 
We mention a typical result from \cite{Panja2023roots}, which we need to state \cref{thm:number-of-real-irreducible-characters}. 
Note that the statement is in terms of probability generating function.
The number of certain roots of identity can be found easily from this formula.
\begin{lemma}\cite[Theorem 4.3]{Panja2023roots}
        Let $a_n$ denote the number of elements in $\GL_n(q)$ which are $M$-th root of identity, where $M\geq 2$, $(M,q)]=1$, $q=p^a$ for some $a$. Let 
    $M=t\cdot p^r$, where $p\nmid t$. Then the generating function of the probability $a_n/|\GL_n(q)|$ is given by
    \begin{align*}
    &1+\sum\limits_{n=1}^{\infty}\dfrac{a_n}{|\GL_n(q)|}z^n=
        \prod\limits_{d|M}\left(1+\sum\limits_{m\geq 1}\sum\limits_{\substack{\lambda\vdash m\\\lambda_1\leq p^r}}
    \dfrac{z^{me(d)}}{q^{e(d)\cdot(\sum_{i}(\lambda_i')^2)}\prod\limits_{i\geq 1}\left(\dfrac{1}{q^{e(d)}}\right)_{m_i(\lambda_{\varphi})}}\right)^{\frac{\phi(d)}{e(d)}},
    \end{align*}
    where $e(d)$ denotes the multiplicative order of $q$ in $\Z/d\Z^\times$.
\end{lemma}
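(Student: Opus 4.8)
The plan is to compute the generating function directly from the rational canonical form parametrization of conjugacy classes of $\GL_n(q)$, combined with the classical product formula for centralizer orders, and then to assemble everything into an Euler product over monic irreducible polynomials.

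First I would translate the condition $g^M = \id$ into combinatorial data. Writing $M = t p^r$ with $p \nmid t$, the Frobenius identity gives $x^M - 1 = (x^t - 1)^{p^r}$ over $\fq$, and $x^t - 1$ is separable because $(t,p)=1$. A conjugacy class is encoded by an assignment $f \mapsto \lambda_f$ of a partition to each monic irreducible $f \neq x$, subject to $\sum_f \deg(f)\,|\lambda_f| = n$; the element is an $M$-th root of identity exactly when its minimal polynomial divides $(x^t-1)^{p^r}$. Since the minimal polynomial is $\prod_f f^{\lambda_{f,1}}$ (largest part of $\lambda_f$) and each irreducible factor of $x^t-1$ occurs there with multiplicity one, hence with multiplicity $p^r$ in $(x^t-1)^{p^r}$, this holds iff (i) every $f$ with $\lambda_f \neq \emptyset$ divides $x^t - 1$, and (ii) each $\lambda_f$ has largest part at most $p^r$. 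Via the Jordan decomposition $g = su$, this just records that the semisimple part has order dividing $t$ while the unipotent part has Jordan blocks of size $\le p^r$.

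Next, since $a_n = \sum_{\text{classes}} |\GL_n(q)|/|C(g)|$, I get $a_n/|\GL_n(q)| = \sum 1/|C(g)|$ over admissible data with $\sum_f \deg(f)\,|\lambda_f| = n$. I would then invoke the classical centralizer-order formula: for data $(\lambda_f)$,
\[
|C(g)| = \prod_f q_f^{\sum_i (\lambda_{f,i}')^2}\prod_{i \ge 1}\left(q_f^{-1};q_f^{-1}\right)_{m_i(\lambda_f)}, \qquad q_f = q^{\deg f},
\]
where $(\,\cdot\,)_k$ is the finite $q$-Pochhammer symbol and $m_i$ counts parts equal to $i$. Because both the degree sum $n$ and the centralizer order factor over the distinct polynomials $f$, the generating function factors as an Euler product: marking each $f$ by $z^{\deg f\,|\lambda_f|}$ and summing the reciprocal centralizer contribution over all partitions $\lambda$ with $\lambda_1 \le p^r$ produces exactly one local factor per admissible $f$, whose $\lambda$-summand matches the summand in the statement with $\deg f = e(d)$.

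Finally I would group the irreducible factors of $x^t - 1$ by the order $d$ of their roots: for each $d \mid t$ there are precisely $\phi(d)/e(d)$ such factors, all of common degree $e(d)$, so their identical local factors coalesce into a single factor raised to the power $\phi(d)/e(d)$, yielding the displayed product (with divisors $d \mid M$ not coprime to $q$, i.e.\ $d \nmid t$, contributing trivially). The Euler-product bookkeeping and the citation of the centralizer formula are routine; the one genuinely delicate step is the translation in the second paragraph—extracting the constraint $\lambda_1 \le p^r$ from $g^M = \id$ by using separability of $x^t - 1$ together with the multiplicity $p^r$ in $(x^t-1)^{p^r}$—which is where all the arithmetic of $M = t p^r$ enters.
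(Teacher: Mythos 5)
Your argument is correct and follows what is essentially the only natural route to this identity: translate $g^M=\id$ into the constraints on the combinatorial data $(f,\lambda_f)$ via $x^M-1=(x^t-1)^{p^r}$ and separability of $x^t-1$, sum $1/|\C_{\GL_n(q)}(g)|$ over admissible data using the classical centralizer product formula, factor the result as an Euler product over the irreducible divisors of $x^t-1$, and collect those divisors into $q$-cyclotomic classes of size $e(d)$ for each $d$. The present paper quotes this lemma from \cite{Panja2023roots} without reproducing a proof, and the cited source's derivation is the same cycle-index computation you describe, so there is nothing further to compare.
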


\section{Number of square roots in some finite groups of Lie type}\label{sec:Lie-type}
In this section, we will identify the elements of $G_n(q)$ that possess a square root within $G_n(q)$, where $G_n(q)$ represents one of the finite general, unitary, symplectic, or orthogonal groups.
This is accomplished by analyzing the conjugacy classes of $G_n(q)$, as any element with a square root will have all of its conjugates having square roots as well.
By $G_0(q)$ we mean the trivial group and $\C_{G_n}(\alpha)$ will be used to denote the centralizer of an element $\alpha$ in $G_n(q)$.
We start with a corollary to \cref{lem:factorization-of-power}.
\begin{corollary}\label{cor:factorization-square}
    Let $f$ be an irreducible polynomial of degree $d$ over a finite field
    $\fq$ of odd order and all irreducible factors of $f(x^2)$ are of degree greater than or equal to $d$.
    Then either $f(x^2)$ is irreducible or $f(x^2)$ has exactly two
    distinct irreducible factors of degree $d$. If $f(x^2)$ is irreducible the matrix $\diag(C_f,C_f)$ has a square root and otherwise  
    $C_f$ has a square root.
\end{corollary}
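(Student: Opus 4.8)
The plan is to apply \cref{lem:factorization-of-power} (Butler's theorem) with $m=2$, which is coprime to $q$ since $q$ is odd, and then translate the resulting factorization data into statements about companion matrices. Let $t$ be the order of a root of $f$; since $f$ is irreducible of degree $d$ over $\fq$ with roots of multiplicative order $t$, one has $d=M(t;q)$, the multiplicative order of $q$ modulo $t$. The whole argument splits according to the parity of $t$, because the prescribed factorization $2=m_1m_2$ depends on whether $2\mid t$: if $t$ is even then $m_1=1$ and $m_2=2$, while if $t$ is odd then $m_1=2$ and $m_2=1$.

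First I would dispose of the factorization dichotomy. If $t$ is even, part (2) of the lemma produces exactly $2d/M(2t;q)$ irreducible factors, all of degree $M(2t;q)$. Since $t\mid 2t$ forces $d=M(t;q)\mid M(2t;q)$, and since the factor count is a positive integer so that $M(2t;q)\mid 2d$, we get $M(2t;q)\in\{d,2d\}$: the value $2d$ gives a single factor of degree $2d$, i.e. $f(x^2)$ is irreducible, while the value $d$ gives two factors of degree $d$. If instead $t$ is odd, the key observation is that $q\equiv 1\imod 2$ and $\gcd(2,t)=1$, so by the Chinese remainder theorem $M(2t;q)=M(t;q)=d$; then taking $e=1$ and $e=2$ in part (2) yields one degree-$d$ factor with roots of order $t$ and one degree-$d$ factor with roots of order $2t$. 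To finish the dichotomy I would note that $f(x^2)$ is separable: its roots are the square roots of the distinct nonzero roots of $f$, and since $q$ is odd each such root contributes two distinct square roots lying in disjoint pairs, so there are $2d$ distinct roots and the two degree-$d$ factors are genuinely distinct (in the odd case they are already distinguished by the orders $t$ and $2t$ of their roots).

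For the square-root statements I would argue purely through companion matrices and the rational canonical form. If $f(x^2)$ is irreducible, let $C_g$ be the $2d\times 2d$ companion matrix of $g(x)=f(x^2)$. Then $f(C_g^2)=g(C_g)=0$, so the minimal polynomial of $C_g^2$ divides the irreducible $f$ and hence equals $f$; since $f$ has degree $d$ and the ambient dimension is $2d$, both invariant factors must equal $f$, forcing $C_g^2$ to be conjugate to $\diag(C_f,C_f)$ and exhibiting a square root. If instead $f(x^2)=g_1g_2$ with each $g_i$ irreducible of degree $d$, let $C_{g_1}$ be the $d\times d$ companion matrix of $g_1$; then $f(C_{g_1}^2)=g_1(C_{g_1})\,g_2(C_{g_1})=0$, so again the minimal polynomial of $C_{g_1}^2$ is $f$, and as the dimension is $d$ this makes $C_{g_1}^2$ conjugate to $C_f$, so $C_{g_1}$ is the desired square root. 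Invertibility of these roots is automatic because the roots of $f$ are nonzero.

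The main obstacle, and really the only place needing care, is the number-theoretic bookkeeping in the even-$t$ case: pinning down that $M(2t;q)$ can only be $d$ or $2d$ and matching these to the two alternatives, together with the clean evaluation $M(2t;q)=d$ in the odd case. Once the factorization pattern is extracted from Butler's theorem, the passage to square roots is routine linear algebra over $\fq$. I would also remark that the hypothesis that every irreducible factor of $f(x^2)$ has degree at least $d$ is in fact automatic, since $t\mid em_2t$ gives $d=M(t;q)\mid M(em_2t;q)$ for each factor degree $M(em_2t;q)$ occurring in the lemma; thus it merely records a feature that the computation above already guarantees.
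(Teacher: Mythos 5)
Your proof is correct and follows the route the paper intends: the paper states this as an unproved corollary of Butler's theorem (\cref{lem:factorization-of-power}), and your case split on the parity of the order $t$ of the roots, the pinning down of $M(2t;q)\in\{d,2d\}$, and the companion-matrix/invariant-factor argument for the square roots supply exactly the details the paper omits. Your observation that the degree hypothesis is automatic is also accurate.
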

A monic irreducible polynomial $f$ is called a \emph{$2$-power} polynomial if $f(x^2)$ has a monic irreducible factor of degree as $\deg f$. 
The set of all $2$-power polynomials will be denoted as $\Phi_2$. 
If $f$ is a monic irreducible polynomial such that $f(x^2)$ is irreducible, we call it to be a \emph{skew $2$-power} polynomial. 
The set of all monic irreducible skew $2$-power polynomials will be denoted by $\Phi_2'$.
We note down some easy observations.
Let $\mathscr{C}$ be a conjugacy class of $\G_n$ and $g_1\sim g_2$ be two elements of $\mathscr C$. Then if $g_1$ has $m$ square roots, so does $g_2$. 
Thus it is enough to find the number of square roots for a particular element in $\mathscr C$. 
It is well known that the centralizers of two conjugate elements are conjugate to each other and thus have the same number of elements. 
Let $g\in\G_n(q)$ be a solution of the equation $X^2=\alpha$. Then $\C_{\G_n}(\alpha)$-conjugates of $g$ are also solutions of $X^2=\alpha$.  
If $g_1$ and $g_2$ are conjugate to each other and both are solutions of $X^2=\alpha$, it follows that they are conjugate by an element of $\C_{\G_n}(\alpha)$.
Because of the containment $\C_{\G_n}(g)\subseteq \C_{\G_n}(\alpha)$, these conjugates of $g$ amount to $[\C_{\G_n}(\alpha):\C_{\G_n}(g)]$ terms.
\begin{lemma}\cite[Theorem 2.3.14]{Franceschi2020}\label{lem:conj-cent-GL-n}
    Let $x\in \GL_n(q)$ has combinatorial data $(f_i,\lambda_{f_i})_{i=1}^{h}$
    with $\lambda_{f_i}=\lambda_{i,1}^{l_i,1}\lambda_{i,2}^{l_i,2}\cdots \lambda_{i,k_i}^{l_i,k_i}$ and $\deg f_i=d_i$. Then $$|\C_{\GL_n(q)}(x)|=q^{\gamma}\cdot 
    \left|\prod\limits_{i=1}^{h}\left(\prod\limits_{j=1}^{k_i}\GL_{l_{i,j}}(q^{d_i})\right)\right|,$$ where $\gamma=\sum\limits_{i=1}^{h}d_i\left(2\sum\limits_{u<v}\lambda_{i,u}l_{i,u}l_{i,v}+\sum\limits_{j}(\lambda_{i,j}-1)l_{i,j}^2\right)$.
\end{lemma}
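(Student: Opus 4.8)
The plan is to read off the centralizer from the primary (generalized eigenspace) decomposition of $V=\fq^n$, regarded as an $\fq[t]$-module with $t$ acting as $x$. Since the $f_i$ are distinct monic irreducibles, $V$ splits as $V=\bigoplus_{i=1}^{h}V_i$, where $V_i=\ker f_i(x)^{N}$ (for $N$ large) is the $f_i$-primary component. An element $g\in\GL_n(q)$ centralizes $x$ if and only if it is $\fq[t]$-linear; any such $g$ commutes with each $f_i(x)$ and hence preserves each $V_i$. Therefore the centralizer factors as a direct product of module-automorphism groups,
\[
\C_{\GL_n(q)}(x)\;\cong\;\prod_{i=1}^{h}\Aut_{\fq[t]}(V_i),
\]
and it suffices to compute $\lvert\Aut_{\fq[t]}(V_i)\rvert$ for each $i$ and multiply.

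First I would treat a single primary block. By the structure theorem over the PID $\fq[t]$, the combinatorial data $\lambda_{f_i}=\lambda_{i,1}^{l_{i,1}}\cdots\lambda_{i,k_i}^{l_{i,k_i}}$ records the isomorphism
\[
V_i\;\cong\;\bigoplus_{j=1}^{k_i}\bigl(\fq[t]/(f_i^{\lambda_{i,j}})\bigr)^{l_{i,j}} .
\]
Thus $V_i$ is a module over the finite chain ring $\fq[t]/(f_i^{\lambda_{i,1}})$, whose residue field is $\fqn{d_i}$ with $d_i=\deg f_i$. The number of module automorphisms over such a ring depends only on the residue-field size and the type $\mu:=\lambda_{f_i}$, and it coincides with the order of the centralizer of a unipotent element of Jordan type $\mu$ in $\GL_{\lvert\mu\rvert}(q^{d_i})$: a unipotent $u=\id+\nu$ makes $\fqn{d_i}^{\,\lvert\mu\rvert}$ a module over $\fqn{d_i}[\nu]\cong\fqn{d_i}[s]/(s^{\mu_1})$, and the elements commuting with $u$ are exactly its module automorphisms. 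I expect this identification — the passage from the primary block to a unipotent centralizer over the extension field $\fqn{d_i}$ — to be the main obstacle; everything afterward is bookkeeping.

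Granting this, I would invoke the classical formula for unipotent centralizer orders (cf.\ \cite{Green55Characters,Macdonald81conjugacy}): for type $\mu$ in $\GL_m(Q)$ with $m=\lvert\mu\rvert$ the order is
\[
Q^{\sum_{s\geq 1}(\mu_s')^2}\prod_{s\geq 1}\prod_{r=1}^{m_s(\mu)}\bigl(1-Q^{-r}\bigr),
\]
where $\mu'$ is the conjugate partition and $m_s(\mu)$ the number of parts of $\mu$ equal to $s$. Using $\lvert\GL_{m}(Q)\rvert=Q^{m^2}\prod_{r=1}^{m}(1-Q^{-r})$ with $Q=q^{d_i}$ and $m_{\lambda_{i,j}}(\mu)=l_{i,j}$, this rearranges to
\[
\lvert\Aut_{\fq[t]}(V_i)\rvert=\bigl(q^{d_i}\bigr)^{\sum_s(\mu_s')^2-\sum_j l_{i,j}^2}\prod_{j=1}^{k_i}\bigl\lvert\GL_{l_{i,j}}(q^{d_i})\bigr\rvert .
\]
Taking the product over $i$ immediately yields the stated $\GL$-factor, so it only remains to identify the exponent of $q$.

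The final step is the combinatorial identity $\sum_{s}(\mu_s')^2=\sum_{u,v}l_{u}l_{v}\min(\lambda_{u},\lambda_{v})$, proved by writing $\mu_s'=\sum_{u:\lambda_u\geq s}l_u$, squaring, and summing over $s\geq 1$ to get $\#\{s:\lambda_u\geq s,\ \lambda_v\geq s\}=\min(\lambda_u,\lambda_v)$. Separating the diagonal terms $u=v$ from the off-diagonal ones, and using that the distinct parts $\lambda_{i,u}$ are totally ordered (so that $\min(\lambda_{i,u},\lambda_{i,v})=\lambda_{i,u}$ for $u<v$ under the indexing in use), gives
\[
\sum_{s}(\mu_s')^2-\sum_{j}l_{i,j}^2
= 2\sum_{u<v}\lambda_{i,u}\,l_{i,u}l_{i,v}+\sum_{j}(\lambda_{i,j}-1)\,l_{i,j}^2 .
\]
Multiplying by $d_i$ and summing over $i$ produces exactly $\gamma$, which completes the proof.
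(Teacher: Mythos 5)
Your proof is correct, but note that the paper itself offers no argument for this lemma at all: it is quoted verbatim from Franceschi's thesis, so there is nothing internal to compare against. What you have written is a genuine derivation, and its three ingredients are all sound: (a) the primary decomposition $V=\bigoplus_i V_i$ and the factorization $\C_{\GL_n(q)}(x)\cong\prod_i\Aut_{\fq[t]}(V_i)$ (valid since homomorphisms between modules supported at distinct primes vanish); (b) the reduction of each primary block to a unipotent centralizer over $\fqn{d_i}$; and (c) the Green--Macdonald formula plus the identity $\sum_s(\mu_s')^2=\sum_{u,v}l_ul_v\min(\lambda_u,\lambda_v)$, whose proof by summing indicator functions is exactly right. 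Two points deserve to be made explicit. First, the step you flag as the main obstacle is justified by the ring isomorphism $\fq[t]/(f_i^{N})\cong\fqn{d_i}[s]/(s^{N})$ (both are equicharacteristic finite chain rings with residue field $\fqn{d_i}$; use Hensel lifting of the residue field, or the Cohen structure theorem applied to the completion of $\fq[t]$ at $(f_i)$); this is what makes ``depends only on the residue-field size and the type'' a theorem rather than an assertion, since it carries $V_i$ to the module of a unipotent element of type $\lambda_{f_i}$ over $\fqn{d_i}$, identifying the two automorphism groups exactly. Second, your observation about the indexing is not cosmetic: the displayed $\gamma$ is only correct when the distinct parts are listed in increasing order ($\lambda_{i,u}<\lambda_{i,v}$ for $u<v$), since otherwise $\min(\lambda_{i,u},\lambda_{i,v})=\lambda_{i,v}$ and the cross terms change; this is the convention implicit in the cited statement. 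Compared with simply citing the literature, your route buys a self-contained proof at the cost of invoking the classical unipotent centralizer formula as a black box, which is a perfectly standard trade.
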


\begin{corollary}\label{cor:squareroot-general}(See also \cite[Proposition 4.6]{KunduSingh22})
   Let $\alpha\in \GL_n(q)$ be a matrix with combinatorial data consisting of polynomials $f$ and partitions $\lambda_f=1^{m_1(\lambda_f)}2^{m_2(\lambda_f)}\cdots$. Then $\alpha$ has a square root if and only if for all $f$,
   \begin{enumerate}
       \item either $f$ is a $2$-power polynomial
       \item or $f(x^2)$ is irreducible and $2|m_j(\lambda_f)$ for all $j$.
   \end{enumerate}
\end{corollary}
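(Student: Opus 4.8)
\emph{Proof idea.} The plan is to reduce the question to a block-by-block analysis on the primary decomposition of $V=\fq^n$ regarded as an $\fq[x]$-module with $x$ acting as $\alpha$, and then to compute precisely the square of each cyclic block. First I would record that the hypothesis of \cref{cor:factorization-square} is automatically met: if $\beta$ is a root of an irreducible factor of $f(x^2)$, then $\beta^2$ is a root of $f$, so $\fq(\beta^2)=\fqn{d}$ and, as $\alpha$ is invertible (whence $f\neq x$ and $\beta\neq 0$), $[\fq(\beta):\fq]\in\{d,2d\}$. Thus every monic irreducible $f$ dividing the characteristic polynomial is either a $2$-power polynomial, in which case $f(x^2)=g_1g_2$ splits into two distinct irreducibles of degree $d$, or a skew $2$-power polynomial, in which case $f(x^2)$ is irreducible of degree $2d$; conditions (1) and (2) are therefore an exhaustive dichotomy.

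The heart of the argument is the following $j$-fold refinement of \cref{cor:factorization-square}, which I would obtain by passing to Jordan form over $\overline{\fq}$. Writing the roots of $g_1$ as the Frobenius orbit $\beta_1,\dots,\beta_d$ of some $\beta_1$, the block $C_{g_1^j}$ is conjugate over $\overline{\fq}$ to $\bigoplus_{i=1}^d J_j(\beta_i)$, and since $\beta_i\neq 0$ each $J_j(\beta_i)^2$ is again a single Jordan block $J_j(\beta_i^2)$, so $C_{g_1^j}^2$ is conjugate to $\bigoplus_i J_j(\beta_i^2)$. Squaring carries $\{\beta_i\}$ onto the Frobenius orbit of $\beta_1^2$, namely the full set of $d$ roots of the separable polynomial $f$; being a surjection between $d$-element sets it is a bijection, so the $\beta_i^2$ are distinct and $C_{g_1^j}^2$ is conjugate to $C_{f^j}$. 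In the skew case the $2d$ roots of the irreducible $f(x^2)$ square two-to-one onto the $d$ roots of $f$, whence $C_{f(x^2)^j}^2$ is conjugate to $\diag(C_{f^j},C_{f^j})$.

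For sufficiency I would assemble a square root block by block on each primary component. If $f$ is a $2$-power polynomial, each cyclic block $C_{f^j}$ occurring with multiplicity $m_j(\lambda_f)$ individually admits the square root $C_{g_1^j}$, with no constraint on the multiplicities. If $f$ is skew and every $m_j(\lambda_f)$ is even, I would group the copies of $C_{f^j}$ into pairs $\diag(C_{f^j},C_{f^j})$, each of which has the square root $C_{f(x^2)^j}$. Taking the direct sum of these block square roots over all $f$ and all $j$ produces an element squaring to something conjugate to $\alpha$, hence a genuine square root of $\alpha$ after conjugation.

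Conversely, suppose $g^2=\alpha$. As $g$ commutes with $\alpha$, it preserves each $\alpha$-primary component $V_f=\ker f(\alpha)^{n}$. The eigenvalues of $g$ on $V_f$ are square roots of roots of $f$, i.e.\ roots of $f(x^2)$; when $f$ is skew this polynomial is irreducible, so the minimal polynomial of $g$ on $V_f$ is a power of $f(x^2)$ and $V_f$ decomposes as an $\fq[g]$-module into cyclic blocks $\fq[x]/(f(x^2)^{a})$. By the computation above each such block squares to two copies of $C_{f^{a}}$, so every cyclic $\fq[\alpha]$-block of $V_f$ occurs an even number of times, i.e.\ $2\mid m_j(\lambda_f)$ for all $j$; for a $2$-power $f$ there is nothing to force. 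I expect the main obstacle to be the key block computation, and in particular the separability bookkeeping that rules out any collapse of eigenvalues under squaring; once that is in place, the localisation to primary components and the parity count are routine.
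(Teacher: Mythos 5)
Your proof is correct, and at the level of strategy it matches the paper's: reduce to individual Jordan blocks of individual irreducible polynomials and use the dichotomy of \cref{cor:factorization-square}. Indeed, the two block-level facts you establish --- that $C_{g_1^j}^2$ is conjugate to $C_{f^j}$ when $f(x^2)=g_1g_2$ splits, and that $C_{f(x^2)^j}^2$ is conjugate to $\diag(C_{f^j},C_{f^j})$ when $f(x^2)$ is irreducible --- are exactly the statements the paper asserts (for Jordan blocks $J_{F,b}$) in the discussion leading to \cref{pro:number-of-square-root-generic-GL}. Where you differ is in how these facts are justified and in what you actually prove. The paper derives the degree dichotomy from Butler's theorem (\cref{lem:factorization-of-power}) on the orders of roots and then leaves the corollary itself to the citation of \cite[Proposition 4.6]{KunduSingh22}; you instead obtain the dichotomy, together with its $j$-fold refinement for higher Jordan blocks, by an elementary computation over $\overline{\fq}$, where separability of $f$, the invertibility of $\alpha$ (so $\beta\neq0$), and odd characteristic guarantee both that $J_j(\beta)^2$ is a single Jordan block and that squaring restricts to a bijection (resp.\ two-to-one map) on the relevant Frobenius orbits. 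Most notably, you supply the necessity direction in full --- $g$ commutes with $g^2=\alpha$, hence preserves each primary component $V_f$, on which for skew $f$ the elementary divisors of $g$ are forced to be powers of the irreducible $f(x^2)$, so each $m_j(\lambda_f)$ is even --- a step the paper omits entirely. What your route buys is a self-contained proof that never leaves the group; what the paper's route buys is that Butler's theorem is already set up to handle general $m$-th powers, not just squares.
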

To determine the number of square roots of a arbitrary element, we first start with the Jordan blocks of a single polynomial. 
We divide this into two cases; first when $f$ is a skew $2$-power polynomial and next when $f$ is a $2$-power polynomial. 
When $f$ is a skew $2$-power polynomial, $F(x)=f(x^2)$ is an irreducible polynomial and $J_{F,b}^2$ is conjugate to $\diag(J_{f,b}, J_{f,b})$.
Note that any square root of $\diag(J_{f,b}, J_{f,b})$ is conjugate to $J_{F,b}$ (consider them as elements in $\GL_{b\cdot\deg F}(q^{\deg F})$). 
Hence in this case any matrix with Jordan data $(f,\lambda_f=(1^{m_1}2^{m_2}\cdots k^{m_k}))$ has 
$
|\C_{\GL_{n}(q)}(J_{f,\lambda_f})|/|\C_{\GL_n(q)}(J_{F,\lambda_F})|
$
square roots, where $\lambda_F$ is given by $1^{m_1/2}2^{m_2/2}\cdots k^{m_k/2}$. Using \cref{lem:conj-cent-GL-n}, this number is
$$
q^{\gamma(\lambda_f)}\dfrac{\prod\limits_{j=1}^{k}|\GL_{m_j}(q^{\deg f})|}{\prod\limits_{j=1}^{k}|\GL_{m_j/2}(q^{2\deg f})|},
$$
where $\gamma(\lambda_f)=\sum\limits_{1\leq u<v\leq k}\frac{3um_um_v}{4}+\sum\limits_{u=2}^{k}\frac{3(u-1)m_u^2}{4}$. 
Now we consider the full Jordan block of a $2$-power polynomial.
Note that this matrix has two different square roots up to conjugacy, as $f(x^2)$ has exactly two distinct irreducible monic factors of degree $\deg f$, say $F_1$ and $F_2$. 
In this case $J_{F_i,b}^2$ is conjugate to $J_{f,b}$ for $i=1,2$.
Unlike the previous case, not any two square roots of $J_{f,b}$ are conjugate to each other. 
However, they are of the same type (see \cite[pp. 407]{Green55Characters}). 
Even $J_{F_i,b}$ and $J_{f,b}$ are of the same type, implying that their centralizer has the same number of elements.
This implies that any matrix with Jordan data $(f,\lambda_f=(1^{m_1}2^{m_2}\cdots k^{m_k}))$ has $$2^{\ell(\lambda_f)}-1$$ many square roots, where $\ell(\lambda_f)$ is number of $j$'s such that $m_j\neq 0$. 
Since for an arbitrary element with combinatorial data $\{(f,\lambda_f):f\in\Phi\}$ the size of the centralizer is the product of the size of the centralizers of the individual Jordan blocks, the following proposition follows from the above discussion.
\begin{proposition}\label{pro:number-of-square-root-arbitrary-GL}
    Let $\alpha\in\GL_n(q)$ has a square root in $\GL_n(q)$.
    Let the combinatorial data attached to $\alpha$ be given by
    \begin{align*}
        &\left\{(f_i,(1^{m_1(\lambda_{f_i})},2^{m_2(\lambda_{f_i})},\ldots)):1\leq i\leq r, f_i\in\Phi_2\right\}\\
        \bigcup&\left\{(g_j,(1^{m_1(\lambda_{g_j})},2^{m_2(\lambda_{g_j})},\ldots)):1\leq j\leq s, g_j\in\Phi_2'\right\}.
    \end{align*}
    Then the number of square roots of $\alpha$ is given by
    \begin{align*}
        \R_{\GL}(\alpha,2)=\prod\limits_{i=1}^{r}\left(q^{\gamma(\lambda_{f_i})}\dfrac{\prod\limits_{j=1}^{k_i}|\GL_{m_j(\lambda_{f_i})}(q^{\deg f_i})|}{\prod\limits_{j=1}^{k_i}|\GL_{m_j(\lambda_{f_i})/2}(q^{2\deg f_i})|}\right)\cdot
        \prod\limits_{j=1}^{s}\left(2^{\ell(\lambda_{g_j})}-1\right),
    \end{align*}
    where $\gamma(\lambda_{f_i})=\sum\limits_{1\leq u<v\leq k_i}\frac{3um_u(\lambda_{f_i})m_v(\lambda_{f_i})}{4}+\sum\limits_{u=2}^{k}\frac{3(u-1)m_u(\lambda_{f_i})^2}{4}$ and $\ell(\lambda_{g_j})$ is the number of $t$'s such that $m_t(\lambda_{g_j})\neq 0$.
\end{proposition}
We document the outcomes analogous to \cref{cor:squareroot-general} specifically for finite unitary, symplectic, and orthogonal groups. The proofs of these results involve essential yet straightforward adjustments, which we opt to exclude here. They represent appropriately adapted versions of the results presented in \cite{PanjaSingh22symporth} and \cite{PanjaSingh2023unitary}.
We present a few necessary definitions now. A \emph{self-reciprocal polynomial} of degree $d$ is a polynomial $f$ satisfying $f(0)\neq 0$ and $f(x)=f^{*}(x)=f(0)^{-1}x^df(x^{-1})$. 
A self-reciprocal irreducible monic polynomial $f$ of degree $r$ is called a \emph{$2^*$-power} polynomial if it has an irreducible self-reciprocal monic factor of degree $r$.
If $f$ is a self-reciprocal irreducible monic and $f(x^2)$ is irreducible, call $f$ to be a \emph{skew $2^*$ power polynomial}.
The set of all $2^*$-power (resp. skew $2^*$-power) polynomials will be denoted by ${\Phi_2^*}$ (resp. ${\Phi_2^*}'$). 
Consider the field $\mathbb F_{q^2}$, and the map $\sigma \colon \mathbb F_{q^2}\longrightarrow\mathbb F_{q^2}$ defined as $\sigma(a):=\overline{a}=a^q$ which is an order two automorphism of the field. 
This further induces an automorphism of $\mathbb F_{q^2}[t]$, the polynomial ring over $\mathbb F_{q^2}$. 
The image of $f\in\mathbb F_{q^2}[t]$ will be denoted by $\overline{f}$. 
A polynomial $f$ of degree $d$ is called \emph{self-conjugate} if $\widetilde{f}=f$ where $\widetilde{f}(t) = \overline{f(0)}^{-1} t^d \bar f(t^{-1})$. 
A self-conjugate irreducible monic polynomial $f$ of degree $d$ will be called an \emph{$\widetilde{2}$-power polynomial} if $f(x^2)$ has a self-conjugate irreducible monic factor $g$ of degree $d$. 
The set of all irreducible $\widetilde{2}$-power polynomials will be denoted as $\widetilde{\Phi}_2$.
A self-conjugate monic irreducible polynomial $f$ will be called a \emph{skew $\widetilde{2}$-power polynomial} is $f(x^2)$ is irreducible.
The set of all irreducible $\widetilde{2}$-power (skew $\widetilde{2}$-power) polynomials will be denoted as $\widetilde{\Phi}_2$ (resp. ${\widetilde{\Phi}_2}'$). 
The conjugacy classes of finite unitary groups, finite symplectic and orthogonal groups can be found in \cite{Ennola62} and \cite{Wall63}.
The determination of semisimple classes for which square-root exists, has been carried out for symplectic and orthogonal groups in \cite{PanjaSingh22symporth} and for unitary groups in \cite{PanjaSingh2023unitary}. 

For the unitary groups we know that two elements in $\U_n(q^2)$ are conjugate if and only if they are $\GL_n(q^2)$-conjugate. 
Also for each possible Jordan form of unipotent matrices, there is exactly one conjugacy class of it in $\U_n(q^2)$ (see \cite[Proposition 5.1.1]{Franceschi2020}). 
Combining all of these together we have the following
\begin{proposition}\label{prop:sqroot-gen-unitary}
       Let $\alpha\in \U_n(q^2)$ be a matrix with combinatorial data consisting of polynomials $f$ and partitions $\lambda_f=1^{m_1(\lambda_f)}2^{m_2(\lambda_f)}\cdots$. Then $\alpha$ has a square root in $\U_n(q^2)$ if and only if for all $f$ one of the following holds
   \begin{enumerate}
       \item $f=\widetilde{f}$ and $f\in\widetilde{\Phi}_2$,
       \item $f=\widetilde{f}$, $f\in\widetilde{\Phi}_2'$ and $2|m_j(\lambda_f)$ for all $j$,
       \item $f\neq\widetilde{f}$ and $f\in\Phi_2$ (and $\widetilde{f}\in\Phi_2$),
       \item $f\neq \widetilde{f}$, $f\in \Phi_2'$ (and $\widetilde{f}\in\Phi_2'$) and $2|m_j(\lambda_f)$ (and $2|m_j(\lambda_{\widetilde{f}})$) for all $j$. 
   \end{enumerate}
\end{proposition}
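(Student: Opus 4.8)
The plan is to mirror the argument behind \cref{cor:squareroot-general} and \cref{pro:number-of-square-root-generic-GL}, while carrying along the extra bookkeeping imposed by the Hermitian form. Since two elements of $\U_n(q^2)$ are conjugate exactly when they are $\GL_n(q^2)$-conjugate, the class of $\alpha$ is recorded by its elementary divisors over $\mathbb{F}_{q^2}$, subject to the symmetry that $f$ and $\widetilde f$ occur with equal partitions. A square root $X$ must commute with $\alpha$, hence preserve each primary component, and $X$ is required to lie in $\U_n(q^2)$, i.e.\ its own elementary-divisor data must be invariant under $g\mapsto\widetilde g$. So the first step is to reduce the existence of a unitary square root to a product of independent problems, one for each orbit of the involution $f\mapsto\widetilde f$ acting on the polynomials attached to $\alpha$: the primary decomposition compatible with this involution writes the underlying Hermitian space as an orthogonal sum of $\{f,\widetilde f\}$-isotypic pieces, and a unitary square root exists for $\alpha$ if and only if one exists on each piece.

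The computation I would isolate at the outset is the identity $\widetilde{f(x^2)}=\widetilde f\,(x^2)$, immediate from the definitions. It has two uses. First, if $f=\widetilde f$ then $f(x^2)$ is itself self-conjugate, so its irreducible factors are permuted by $g\mapsto\widetilde g$; combined with \cref{cor:factorization-square} (read over $\mathbb{F}_{q^2}$) this leaves only two possibilities, that $f(x^2)$ is irreducible or that it is a product of two factors of degree $\deg f$. Second, when $f\neq\widetilde f$ the identity pairs the factors of $f(x^2)$ with those of $\widetilde f(x^2)$, so any $\GL$-square root chosen on the $f$-part acquires a canonical conjugate on the $\widetilde f$-part.

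For a non-self-conjugate orbit $\{f,\widetilde f\}$ the two isotypic pieces are dual under the form and together span a nondegenerate hyperbolic Hermitian space, on which prescribing a unitary $X$ amounts to prescribing an arbitrary element of $\GL$ on the $f$-component alone, the $\widetilde f$-component being then forced by $X\in\U_n(q^2)$. Hence the existence of a unitary square root for this orbit is exactly the $\GL_n(q^2)$ criterion of \cref{cor:squareroot-general} applied to $f$: either $f\in\Phi_2$ with no constraint on the partition, which is case (3), or $f\in\Phi_2'$ with $2\mid m_j(\lambda_f)$ for all $j$, which is case (4); the parenthetical assertions $\widetilde f\in\Phi_2$ and $\widetilde f\in\Phi_2'$ drop out of the identity $\widetilde{f(x^2)}=\widetilde f(x^2)$.

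The self-conjugate orbits $f=\widetilde f$ are where the real work lies, and I expect them to be the main obstacle, because there is no dual component to absorb a $\GL$-square root: the root must already be unitary on the single Hermitian piece, which forces its elementary divisors to be self-conjugate polynomials. If $f\in\widetilde\Phi_2$, the definition supplies a self-conjugate irreducible factor $g\mid f(x^2)$ with $\deg g=\deg f$; then $g$ is the elementary divisor of a genuine unitary element, $J_{g,b}^2$ is conjugate to $J_{f,b}$, and lifting block by block produces a unitary square root for every partition, giving case (1). If instead $f\in\widetilde\Phi_2'$, then $f(x^2)$ is irreducible and, by the identity above, self-conjugate, so the only admissible square roots come from $J_{f(x^2),b}$, whose square is conjugate to $\diag(J_{f,b},J_{f,b})$; the blocks of $f$ must therefore pair off and each $m_j(\lambda_f)$ must be even, giving case (2). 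The crux is precisely the claim that, for self-conjugate $f$, the self-conjugacy of the degree-$\deg f$ factors of $f(x^2)$ is faithfully captured by the dichotomy $\widetilde\Phi_2$ versus $\widetilde\Phi_2'$; once this is established, multiplying the per-orbit criteria over all polynomials attached to $\alpha$ yields the four stated conditions, and the centralizer formula of \cref{lem:conj-cent-GL-n}, transported to $\U_n(q^2)$, would count the roots just as in \cref{pro:number-of-square-root-generic-GL} were an enumeration needed.
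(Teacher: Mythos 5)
Your overall strategy---reducing to $\GL_n(q^2)$-conjugacy, splitting over orbits of the involution $f\mapsto\widetilde f$, using the identity $\widetilde{f(x^2)}=\widetilde f(x^2)$, and handling the non-self-conjugate orbits via the hyperbolic pairing---is exactly the adaptation the paper has in mind (it omits the proof, recording only that $\U_n(q^2)$-conjugacy agrees with $\GL_n(q^2)$-conjugacy and pointing back to \cref{cor:squareroot-general}). The difficulty is the step you yourself label ``the crux'' and then leave unestablished: that for self-conjugate $f$ the alternatives $f\in\widetilde{\Phi}_2$ and $f\in\widetilde{\Phi}_2'$ exhaust all possibilities. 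They do not. By \cref{cor:factorization-square} together with your identity, when $f=\widetilde f$ and $f(x^2)=gh$ is reducible, the pair $\{g,h\}$ is only \emph{permuted} by the tilde involution, so a third alternative survives: $h=\widetilde g$ with neither factor self-conjugate, i.e.\ $f\notin\widetilde{\Phi}_2\cup\widetilde{\Phi}_2'$. This already occurs in degree one: let $\alpha$ generate $\U_1(q^2)$, so $\alpha$ has order $q+1$, and put $f=x-\alpha$. Then $f=\widetilde f$, any square root $\beta\in\mathbb F_{q^2}^\times$ of $\alpha$ has order $2(q+1)$ and hence $\beta^{q+1}=-1$, and $f(x^2)=(x-\beta)(x+\beta)$ with $\widetilde{x-\beta}=x-\beta^{-q}=x+\beta$, neither factor being self-conjugate.

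Your argument says nothing about such orbits, and they cannot be discarded: $\diag(\alpha,\alpha)\in\U_2(q^2)$ has combinatorial datum $(x-\alpha,1^2)$ with $f\notin\widetilde{\Phi}_2\cup\widetilde{\Phi}_2'$, yet $\diag(\beta,\beta^{-q})=\diag(\beta,-\beta)$ is unitary for the hyperbolic Hermitian form and squares to $\diag(\alpha,\alpha)$. The correct criterion in this third alternative is the same parity condition as in your case (2): a unitary square root must carry $g$ and $\widetilde g$ with equal partitions, forcing $2\mid m_j(\lambda_f)$ for all $j$. So completing your proof requires adding this case explicitly---and doing so shows that the four-case statement you are proving is itself incomplete. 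The remainder of your outline (the non-self-conjugate orbits giving cases (3) and (4), and the genuine $\widetilde{\Phi}_2$ and $\widetilde{\Phi}_2'$ cases giving (1) and (2)) is sound.
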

A unipotent element of $\GL_{2n}(q)$ belongs to $\Sp_{2n}(q)$ precisely when all Jordan block of odd dimension occur with even multiplicity. 
There are $2^r$ many $\Sp_{2n}(q)$-conjugacy class for some $r$ (see \cite[Proposition 5.1.3]{Franceschi2020}) with same Jordan data, each corresponding to a \emph{type}. 
Although they split into several {types}, using the tensor product construction (see \cite[Lemma 3.4.7]{BurnessGiudici16Derangements}) it can be shown that under the squaring map the image is conjugate to the same type (see \cite[Corollary 7.6]{PanjaSingh22symporth}). 
To describe the conjugacy classes of matrices of $\Sp_{2n}(q)$ with eigenvalues $\pm1$, we need the concept of signed partition.
A \emph{symplectic signed partition} is a partition of a number $k$, such that the odd parts have even multiplicity and even parts have a sign associated with it. 
They actually correspond to the splitting of conjugacy classes of $\GL_{2n}(q)$ inside $\Sp_{2n}(q)$. We note that the combinatorial data corresponding to a conjugacy class of an element in $\Sp_{2n}(q)$ has the form 
\begin{align*}
&\left\{(f,\lambda_f=1^{m_1(\lambda_f)},2^{m_2(\lambda_f)},\ldots):f\in\Phi, f\neq x, x\pm 1\right\}\\
\bigcup&\left\{(h,\lambda_h=1^{\pm m_1(\lambda_h)},2^{m_2(\lambda_h)},3^{\pm m_3(\lambda_h)},\ldots), h=x\pm 1\right\}.    
\end{align*}
Noting that all the unipotent conjugacy classes have a square root in $\Sp_{2n}(q)$, we have the following result concerning the elements which has a square root in $\Sp_{2n}(q)$, in view of \cite[Corollary 7.4]{PanjaSingh22symporth}.
\begin{proposition}\label{prop:sqroot-gen-symplectic}
       Let $\alpha\in \Sp_{2n}(q)$ be a matrix with combinatorial data consisting of polynomials $f$ and partitions $\lambda_f$. Then $\alpha$ has a square root in $\Sp_{2n}(q)$ if and only if for all $f\neq x\pm 1$ one of the following holds
   \begin{enumerate}
       \item $f=f^*$ and $f\in{\Phi^*}_2$,
       \item $f=f^*$, $f\in{\Phi^*}_2'$ and $2|m_j(\lambda_f)$ for all $j$,
       \item $f\neq{f^*}$ and $f\in\Phi_2$ (and ${f^*}\in\Phi_2$),
       \item $f\neq {f^*}$, $f\in \Phi_2'$ (and ${f^*}\in\Phi_2'$) and $2|m_j(\lambda_f)$ (and $2|m_j(\lambda_{{f^*}})$) for all $j$. 
   \end{enumerate}
   Furthermore, all unipotent element has a square root and no element with combinatorial data $(x+1,\lambda_{x+1})$ has a square root.
\end{proposition}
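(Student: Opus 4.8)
The plan is to reduce the existence of a symplectic square root of $\alpha$ to an independent question on each primary component of $\alpha$, quote \cite[Corollary 7.4]{PanjaSingh22symporth} for the generic factors $f\neq x\pm1$, and then dispatch the two exceptional eigenvalues $\pm1$ directly. First I would use the orthogonal decomposition of $V=\fq^{2n}$ with respect to the alternating form into $\alpha$-invariant, mutually perpendicular, nondegenerate blocks indexed by the $\{f,f^*\}$-orbits of the irreducible factors of the characteristic polynomial: a self-reciprocal factor $f=f^*$ spans its own nondegenerate block $V_{f}$, while a reciprocal pair $f\neq f^*$ contributes a hyperbolic block $V_{f}\oplus V_{f^*}$. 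Since $X^2=\alpha$ forces $X$ to commute with $\alpha$ and hence to preserve every generalized eigenspace of $\alpha$, any square root $X$ respects this decomposition, and $X\in\Sp_{2n}(q)$ if and only if $X$ restricts to a symplectic transformation of each nondegenerate block. As the centralizer of $\alpha$ factors as a product over these blocks, existence of a square root reduces to the corresponding question on each block separately; the precise factorization of $f(x^2)$ that controls the eigenvalues of $X$ on a block is exactly the content of \cref{lem:factorization-of-power} and \cref{cor:factorization-square}.

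For a reciprocal pair $f\neq f^*$ the block is hyperbolic, and a symplectic square root is obtained by choosing any $\GL$-square root on the totally isotropic summand $V_f$ and extending to $V_{f^*}$ by the form-adjoint; conversely every symplectic square root restricts to such data on $V_f$. This reduces conditions (3) and (4) verbatim to \cref{cor:squareroot-general} applied to $f$ on $V_f$, i.e.\ $f\in\Phi_2$ with no constraint, or $f\in\Phi_2'$ together with $2\mid m_j(\lambda_f)$ for all $j$. For a self-reciprocal factor $f=f^*$ with $f\neq x\pm1$ the block $V_f$ carries an intrinsic nondegenerate form and I would invoke \cite[Corollary 7.4]{PanjaSingh22symporth}: tracking how $f(x^2)$ splits into self-reciprocal pieces yields precisely the dichotomy (1)--(2), while the tensor-product construction of \cite[Lemma 3.4.7]{BurnessGiudici16Derangements}, as in \cite[Corollary 7.6]{PanjaSingh22symporth}, shows that squaring preserves the type of a class, so that the splitting of a fixed Jordan datum into several types introduces no further obstruction.

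It then remains to treat the two special polynomials $x\pm1$. For $x-1$ the cleanest argument avoids any form computation: a unipotent $u$ has order a power $p^{a}$ of the odd characteristic, so $2$ is invertible modulo $\ord(u)$; choosing $s$ with $2s\equiv1\imod{p^{a}}$ gives $u^{s}\in\langle u\rangle\subseteq\Sp_{2n}(q)$ with $(u^{s})^{2}=u$. Hence every unipotent element has a square root lying in the cyclic group it generates, and combined with the block reduction this shows that an element whose only special part sits at $x-1$ is unobstructed. This is the easy half of the ``furthermore'' clause.

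The hard part will be the factor $x+1$. Here the restriction of $\alpha$ to the $(-1)$-block has even order $2p^{a}$, so the power trick fails and a square root $X$ must acquire genuinely new eigenvalues, namely the roots $\pm\xi$ of $x^{2}+1$, where $\xi^{2}=-1$ and $\xi^{-1}=-\xi$ form a reciprocal pair. The symplectic form-adjoint interchanges the $\xi$- and $(-\xi)$-eigenstructures of $X$, forcing the Jordan blocks of $X$ at $\xi$ and at $-\xi$ to match; squaring then shows that a single Jordan block of size $b$ at $-1$ can only come from a single block of $X$ at $\xi$, whose determinant contribution is $\xi^{\,b}$ and is incompatible with $\det X=1$. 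This determinant and type incompatibility is already visible for the class with data $(x+1,2^{1})$ inside $\Sp_{2}(q)=\SL_2(q)$, and making it uniform across the two regimes $q\equiv1$ and $q\equiv3\imod{4}$ (according to whether $x^{2}+1$ splits over $\fq$) is the crux of the statement. I expect this step to require the most care, since it couples the reciprocal pairing of $\pm\xi$ with the signed-partition bookkeeping that records the type of the $(-1)$-block, and it is precisely this obstruction that accounts for the stated behaviour of the $x+1$ part.
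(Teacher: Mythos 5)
Your reduction to pairwise-orthogonal primary blocks, the treatment of the reciprocal pairs $f\neq f^*$ via the hyperbolic/form-adjoint construction, and the handling of the unipotent part by the power trick ($u\mapsto u^{s}$ with $2s\equiv 1\pmod{p^{a}}$) are all sound, and they are consistent with the paper, which gives no argument of its own beyond deferring the $f\neq x\pm1$ cases to \cite[Corollary 7.4]{PanjaSingh22symporth} and recording the unipotent observation.

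The genuine gap is the $x+1$ step, which you yourself flag as the crux and for which the sketch you offer does not work. You correctly observe that a square root $X$ of $-u$ ($u$ unipotent) has all eigenvalues among $\pm\xi$ with $\xi^{2}=-1$, and that the form-adjoint forces the Jordan data of $X$ at $\xi$ and at $-\xi$ to coincide. But then the determinant contribution of such a matched pair of blocks of size $b$ is $\xi^{b}(-\xi)^{b}=(-\xi^{2})^{b}=1$, so the determinant obstruction you invoke evaporates; concretely, $\left(\begin{smallmatrix}0&1\\-1&0\end{smallmatrix}\right)\in\Sp_{2}(q)$ squares to $-I$, so the element with combinatorial datum $(x+1,1^{2})$ \emph{does} have a square root. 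What your pairing argument actually yields is a parity obstruction: since $J_{\xi,b}\oplus J_{-\xi,b}$ squares to $J_{-1,b}\oplus J_{-1,b}$, every block size in $\lambda_{x+1}$ of a square must occur with even multiplicity (this is what rules out, e.g., the single block $J_{-1,2}$ in $\Sp_{2}(q)$), and a converse would still require the signed-partition/type bookkeeping you mention. So the blanket non-existence claim for the $x+1$ part cannot be established by the route you propose, and this portion of the proposal does not go through as written.
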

A similar result holds for the orthogonal groups as well; however, we opt not to present the result here.
\section{Number of real conjugacy classes}\label{sec:app}
Recall that for a group $G$, a conjugacy class $\mathscr{C}$ is called real if $\mathscr{C}=\mathscr{C}^{-1}=\{g^{-1}:g\in\mathscr{C}\}$. 
It is a well-established fact that the count of real irreducible characters aligns with the tally of real conjugacy classes.
Brauer’s Problem $14$ asks whether the number of characters which arise from real irreducible representations (i.e. irreducible characters with Frobenius-Schur indicator $1$) can be described group theoretically.
Murray and Sambale have affirmed this in their work \cite[Theorem A]{MurraySambale2023Brauer}. 
In particular they have shown that for a finite group $G$, the number of real conjugacy classes is given by $|s(2)|/|G|$, where $s(2)=\{(g,h)\in G^2: g^2h^2=1\}$.
We apply the results of the previous section to find the cardinality of $s(2)$ in the case of $\GL_n(q)$, thereby finding the number of real conjugacy classes of $\GL_n(q)$. 
The same technique can be used to find the number of real conjugacy classes of the unitary group $\U_n(q)$, the symplectic group $\Sp_{2n}(q)$, and the orthogonal groups $\On{\epsilon}_n(q)$ for $\epsilon\in \{0,\pm\}$. 
We refrain from mentioning them. Here is the result for $\GL_n(q)$.
\begin{theorem}\label{thm:number-of-real-irreducible-characters}
    The number of real conjugacy classes of $\GL_n(q)$ is given by
    \begin{align*}
        1+\dfrac{1}{|\GL_n(q)|}\left(c_4+{c_2(c_2-1)}+\sum\limits_{\substack{\alpha\in\GL_n(q)^2\\\alpha\neq1,\,{\alpha^2\neq 1}}}{\R_{\GL}(\alpha,2)(\R_{\GL}(\alpha,2)-1)}\right),
    \end{align*}
    where $c_t$ is the number of elements of order $t$ for $t=2,4$, 
\end{theorem}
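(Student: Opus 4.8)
The plan is to use the Murray--Sambale solution to Brauer's Problem $14$ \cite{MurraySambale2023Brauer}, by which the number of real classes of $\GL_n(q)$ equals $|s(2)|/|\GL_n(q)|$ with $s(2)=\{(g,h)\in\GL_n(q)^2:g^2h^2=1\}$, and to evaluate $|s(2)|$ through the square-root counts of \cref{pro:number-of-square-root-generic-GL}. First I would rewrite $s(2)$ in a usable shape: the substitution $h\mapsto h^{-1}$ is a bijection of $\GL_n(q)$ that converts the relation $g^2h^2=1$ into $g^2=h^2$, so $|s(2)|=|\{(g,h):g^2=h^2\}|$. Grouping such pairs by their common square $\alpha=g^2=h^2$, and noting that the two coordinates then range independently over the square roots of $\alpha$, gives $|s(2)|=\sum_{\alpha}\R_{\GL}(\alpha,2)^2$, where $\alpha$ runs over $\GL_n(q)$ and only squares contribute since $\R_{\GL}(\alpha,2)=0$ otherwise.

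Next I would separate the diagonal. As every $g$ has a unique square, $\sum_{\alpha}\R_{\GL}(\alpha,2)=|\GL_n(q)|$, so writing $\R_{\GL}(\alpha,2)^2=\R_{\GL}(\alpha,2)+\R_{\GL}(\alpha,2)(\R_{\GL}(\alpha,2)-1)$ yields $|s(2)|=|\GL_n(q)|+\sum_{\alpha}\R_{\GL}(\alpha,2)(\R_{\GL}(\alpha,2)-1)$. Division by $|\GL_n(q)|$ produces the leading $1$, so it remains to compute $\sum_{\alpha}\R_{\GL}(\alpha,2)(\R_{\GL}(\alpha,2)-1)$, the number of ordered pairs of distinct elements with a common square. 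For every nonidentity square $\alpha$ of generic combinatorial type, \cref{pro:number-of-square-root-generic-GL} evaluates $\R_{\GL}(\alpha,2)$ exactly, and these account for the summand $\sum_{\alpha\ne1}\R_{\GL}(\alpha,2)(\R_{\GL}(\alpha,2)-1)$ in the statement.

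The remaining contribution is non-generic and is where $c_2$ and $c_4$ enter. \cref{pro:number-of-square-root-generic-GL} does not apply at $\alpha=1$, because the square roots of the identity---the identity together with all involutions---occupy many conjugacy types; directly $\R_{\GL}(1,2)=1+c_2$, so the $\alpha=1$ term equals $(1+c_2)c_2$. Among the ordered pairs of distinct square roots of the identity, those formed by two distinct involutions number $c_2(c_2-1)$; the remaining pairs each involve the identity, and I would reorganize their count using the elements of order $4$, every one of which is a square root of an involution and is therefore enumerated by $c_4$. Matching these quantities against the roots-of-unity generating function of \cite[Theorem 4.3]{Panja2023roots} is what recasts $(1+c_2)c_2$ as the displayed $c_4+c_2(c_2-1)$.

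The hard part will be exactly this reconciliation of the non-generic classes: one must isolate the classes to which \cref{pro:number-of-square-root-generic-GL} does not directly apply---chiefly $\alpha=1$, together with the central involutions whose square roots straddle central and regular semisimple types---and verify that their total contribution collapses precisely to $c_4+c_2(c_2-1)$. This hinges on using \cite[Theorem 4.3]{Panja2023roots} to pin down the exact values of $c_2$ and $c_4$; once that bookkeeping is settled, the generic terms are read directly off \cref{pro:number-of-square-root-generic-GL}, and dividing by $|\GL_n(q)|$ gives the stated formula.
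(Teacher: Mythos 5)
Your main line of argument is the same as the paper's (count $|s(2)|$ via fibers of the square map, using Murray--Sambale), and the part of it that you carry out is correct: the substitution $h\mapsto h^{-1}$ gives $|s(2)|=\sum_{\alpha}\R_{\GL}(\alpha,2)^2$, splitting off $\sum_{\alpha}\R_{\GL}(\alpha,2)=|\GL_n(q)|$ leaves $|\GL_n(q)|+\sum_{\alpha}\R_{\GL}(\alpha,2)(\R_{\GL}(\alpha,2)-1)$, and the $\alpha=1$ term is $\R_{\GL}(1,2)(\R_{\GL}(1,2)-1)=(1+c_2)c_2$ since the square roots of $1$ are the identity together with the $c_2$ involutions. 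This is cleaner than the paper's own argument, whose claimed ``disjoint union'' is not disjoint: $\{(g,g^{-1})\}$ meets $\{(g,h):g^2h^2=1,\ g^2\neq1\neq h^2\}$ whenever $g^2\neq1$, the set $\{(g,g):o(g)=4\}$ is entirely contained in it, and $\{(g,h):g\neq h,\ g^2=h^2=1\}$ has $(1+c_2)c_2$ elements rather than $c_2(c_2-1)$.

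The step you defer as ``the hard part''---recasting $(1+c_2)c_2$ as $c_4+c_2(c_2-1)$---is where the argument cannot be completed, and no appeal to \cite[Theorem 4.3]{Panja2023roots} will fix it: since $(1+c_2)c_2=c_2(c_2-1)+2c_2$, the reconciliation requires the identity $c_4=2c_2$, which is false in general. The $2c_2$ leftover pairs are $(1,g)$ and $(g,1)$ with $g$ an involution; they have nothing to do with elements of order $4$, which are square roots of involutions and hence already accounted for inside the sum over $\alpha\neq1$ (indeed $c_4$ ends up double-counted in the paper's version). Concretely, in $\GL_2(3)$ one has $c_2=13$, $c_4=6$, and $\sum_{\alpha\neq1}\R_{\GL}(\alpha,2)(\R_{\GL}(\alpha,2)-1)=58$; the displayed formula gives $1+\tfrac{220}{48}$, which is not even an integer, whereas the version your computation actually proves, namely
\[
1+\frac{1}{|\GL_n(q)|}\left(c_2(c_2+1)+\sum_{\substack{\alpha\in\GL_n(q)^2\\\alpha\neq1}}\R_{\GL}(\alpha,2)(\R_{\GL}(\alpha,2)-1)\right),
\]
returns the correct count of $6$ real classes. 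So the gap is not in your method but in the target: stop your derivation at $c_2(c_2+1)$ rather than trying to match the stated constant $c_4+c_2(c_2-1)$.
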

\begin{proof}
    We count the number of elements in $s(2)$. Note that $s(2)$ is a disjoint union of the sets $\{(g,g^{-1}):g\in \GL_n(q)\}$, $\{(g,g):o(g)=4\}$, $\{(g,h):g\neq h, g^2=h^2=1\}$ and $\{(g,h):g^2h^2=1, g^2\neq 1\neq h^2, g^2\neq g^{-2}\}$. 
    The last set has cardinality as same as $\{(g,h)|g^2=h^2\neq 1,g^2\neq g^{-2}\}$. This finishes the proof.
\end{proof}

\printbibliography
\end{document}